\def\f2{\ensuremath{\mathbb F_2}}
\def\R{\ensuremath{\mathbb R}}
\def\H{\ensuremath{\mathcal H}}
\def\N{\ensuremath{\mathcal N}}
\def\SH{\ensuremath{\mathcal S^h}}
\def\SS{\ensuremath{\mathcal S^s}}
\def\SHS{\ensuremath{\mathcal S^{hs}}}
\newcommand\eg{e.\,g.\,}
\newcommand\ie{i.\,e.\,}
\renewcommand\mod{\text{ mod\ }}
\def\Z{\mathbb Z}
\def\Q{\mathbb Q}
\newtheorem{thm}{Theorem}[section]
\newtheorem{lm}[thm]{Lemma}
\newtheorem{cor}[thm]{Corollary}
 \theoremstyle{remark}
\newtheorem{ex}[thm]{Example}
\newtheorem{rem}[thm]{Remark}
\newtheorem{defn}[thm]{\bf Definition}
\begin{document}

\title{How different can $h$-cobordant manifolds be?}

\subjclass[2010]{Primary 57R80; Secondary 57R67}

\author{Bj\o rn Jahren}
\address{Dept. of Math., University of Oslo, Norway}
\email{bjoernj@math.uio.no}
\author{S\l awomir Kwasik}
\address{Dept. of Math., Tulane University, New Orleans, LA, USA}
\email{kwasik@math.tulane.edu}

\begin{abstract} We study the homeomorphism types of manifolds $h$-cobordant to a fixed one.  Our investigation
is  partly motivated by the notion of {\em special manifolds} introduced by Milnor in his study of lens spaces. In particular 
we revisit and clarify some of the claims concerning  $h$-cobordisms of these manifolds.
\end{abstract}

\maketitle

\section{Introduction}

The problem of recognizing whether two homotopy equivalent manifolds $M$ and $N$ are homeomorphic, 
 diffeomorphic or PL-isomorphic, depending on the category we work in, is of fundamental
importance in modern  geometric topology. Typically, in each category this problem is approached in three steps:
\begin{enumerate}
\item[i.] Showing that $M$ and $N$ are cobordant
\item[ii.] Improving the cobordism to an $h$-cobordism
\item[iii.] Showing that the Whitehead torsion of the constructed $h$-cobordism is trivial.
\end{enumerate}
The $s$-cobordism theorem of Barden-Mazur-Stallings then yields an equivalence between $M$ and $N$,
provided the dimension is at least 5, see \cite{mazur}, \cite{KiSi}, \cite{R}.\par
Mainly for convenience we will work with topological manifolds in this paper.  See, however, section \ref{sec:diff} for comments 
on the other categories.\par
Showing that the Whitehead torsion is trivial is often the most difficult step in the above program.
However, the triviality of the Whitehead torsion is not always necessary for $M$ and $N$ to be homeomorphic:  there are many examples of non-trivial $h$-cobordisms such that the ends are homeomorphic.  
We call $h$-cobordisms with homeomorphic ends {\em inertial}, and  a central problem is to determine the subset of elements of the 
Whitehead group $Wh(\pi_1(M))$ that  can be realized as Whitehead torsions of inertial $h$-cobordisms. This is 
in general very difficult, and only partial results in this direction are known --- see 
\eg \cite{mwt}, \cite{H75, H80}, \cite{L74, L77}.\par
The following  important observation was made by Hatcher and Lawson in \cite{HL}:\medskip

\noindent{\bf Stability criterion.} {\em Let $M$ be a manifold of dimension $n\geqslant 5$ with fundamental group $G$.  If $\tau\in Wh(G)$ can
 be represented by a $d\times d$-matrix, then every $h$-cobordism with one end homeomorphic to
$M\#_d (S^p\times S^{n-p})$ is inertial, if $2\leqslant p\leqslant n-2$.}\bigskip

(Here $M\#_d (S^p\times S^{n-p})$ denotes the connected sum of $M$ and $d$ copies of products of 
spheres $S^p\times S^{n-p}$.)\par 
Since  Whitehead groups of {\em finite} groups are much better understood than those of infinite groups,  
in what follows we will mostly concentrate on the finite group case. 
For example, it turns out that if $G$ is finite, we can choose $d=2$ for all $\tau$ in the above proposition \cite{V}, and it 
follows that  {\em every} $h$-cobordism from $M\#(S^p\times S^{n-p})\#(S^p\times S^{n-p})$ is inertial.
\smallskip

On the other hand, results by Milnor for lens spaces \cite{mwt} and \cite{KS99} for  general spherical space forms show 
that for these manifolds, no non-trivial $h$-cobordism is inertial. Hence, for some manifolds, stabilization by 
one or two copies of $S^p\times S^{n-p}$ is necessary for $h$-cobordant manifolds to become 
homeomorphic, except in the the trivial case. In fact, for lens spaces (and also many other space forms), 
one copy of $S^p\times S^{n-p}$ will always suffice. More generally, by Hatcher-Lawson's stability criterion the same is true for 
all manifolds with fundamental groups such that every element in the Whitehead group is represented 
by units in the group ring.  This argument breaks down for general groups, but to the best of our knowledge, 
until now no example has been known where one copy of $S^p\times S^{n-p}$ in the stabilization is not enough.  Therefore 
a starting points of our investigation was the following question:
\begin{quote}
Is there a manifold $M$ with finite fundamental group and  an $h$-cobordism
from $M \#(S^p\times S^{n-p})$ which is not inertial?
\end{quote} 
One of our main results is a positive  answer to this question, in the following equivalent form:

\begin{thm}\label{thm:noninertial}
In every  dimension $n=4k+3\geqslant 7$ we can find  $h$-cobordant manifolds $M$ and $N$ with finite fundamental groups such that 
$M\#(S^p\times S^{n-p})$ and $N\#(S^p\times S^{n-p})$ are not homeomorphic for any $p$ such that 
$2\leqslant p\leqslant n-2$.
\end{thm}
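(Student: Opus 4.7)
My plan is to exploit $SK_1(\Z G) \subseteq K_1(\Z G)$ as a source of Whitehead torsions $\tau \in Wh(G)$ that are not representable by $1 \times 1$ matrices over $\Z G$. When $G$ is finite abelian, $\Z G$ is commutative and $K_1(\Z G) = (\Z G)^\times \oplus SK_1(\Z G)$, so the $1 \times 1$-representable classes in $Wh(G)$ are exactly the images of units, and every nontrivial class in the image of $SK_1(\Z G)$ escapes this subgroup. For suitable $G$ --- for instance, finite abelian $p$-groups with $SK_1(\Z G)\neq 0$, which exist by the computations of Oliver --- we fix a nontrivial $\tau$ in the image of $SK_1(\Z G)$ in $Wh(G)$.

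Given such $G$ and $\tau$, construct a closed topological manifold $M$ of dimension $n=4k+3 \geq 7$ with $\pi_1 M = G$ (routine, e.g.\ by thickening a 2-complex presenting $G$ and performing surgery), and apply the standard realization theorem (valid for $n \geq 5$) to produce an $h$-cobordism $(W;M,N)$ with $\tau(W) = \tau$. The stabilized $h$-cobordism $W \# (S^p\times S^{n-p})$ still has Whitehead torsion $\tau$ in $Wh(G)$, so the theorem reduces to showing that this stabilized $h$-cobordism is not inertial for any $p$ with $2 \leq p \leq n-2$.

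Suppose for contradiction that $X_M := M \# (S^p\times S^{n-p})$ and $X_N := N \# (S^p\times S^{n-p})$ are homeomorphic. Composing with such a homeomorphism produces an inertial self-$h$-cobordism of $X_M$ whose torsion equals $\tau$ up to the natural $\Aut(\pi_1 X_M) = \Aut(G)$-action. Equivalently, there must exist a self-homotopy equivalence of $X_M$ whose Whitehead torsion is $\tau$ modulo the $\Aut(G)$-orbit, so it suffices to rule out every such self-equivalence.

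This is the heart of the argument and the main obstacle. The Hatcher--Lawson construction realizes exactly the $1 \times 1$-representable torsions among self-homeomorphisms of $X_M$, and my aim is to prove a matching upper bound: after a single stabilization, no self-homotopy equivalence of $X_M$ contributes a nonzero class in $SK_1(\Z G)$ modulo the $\Aut(G)$-orbit. I expect this to follow from a $K$-theoretic description of $\pi_0 \Aut(X_M)$ --- assembled from automorphisms of $G$, self-equivalences of the sphere factor $S^p\times S^{n-p}$, and Hatcher--Lawson twists --- together with the observation that each of these contributions to the torsion factors through $GL_1(\Z G) \to K_1(\Z G)$ and hence vanishes on $SK_1$. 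Choosing $\tau$ to be $\Aut(G)$-invariant (easy when $G = (\Z/p)^r$, where $\Aut(G)$-action on $SK_1$ is computable) then delivers the contradiction.
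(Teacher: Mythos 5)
Your overall outline is the right one---pick a group $G$ with non-unit torsions in $Wh(G)$, realize such a $\tau$ as the torsion of an $h$-cobordism $(W;M,N)$, and then argue that $M\#(S^p\times S^{n-p})$ and $N\#(S^p\times S^{n-p})$ cannot be homeomorphic---but the heart of the argument is exactly the step you flag as ``the main obstacle,'' and what you propose there does not work.

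First, a concrete error in the reduction. If $f:X_M\to X_N$ were a homeomorphism and $h:X_N\to X_M$ the natural homotopy equivalence of the stabilized $h$-cobordism, then $h\circ f$ is a self-homotopy equivalence of $X_M$ with torsion $\tau(h)=-\tau+(-1)^n\bar\tau$ (formula (\ref{eq:tauh})), not $\tau$. For $n$ odd and $G$ abelian the involution is trivial, so $\tau(h)=-2\tau$. Since classes coming from $SK_1(\Z G)$ are frequently $2$-torsion, it can easily happen that $-2\tau=0$, in which case $h\circ f$ is a \emph{simple} homotopy equivalence and there is no contradiction to extract from the torsion at all. So the abelian route collapses at exactly the point where you hope to finish. (This is one reason the paper uses generalized quaternion groups, not abelian ones.) Second, the ``$K$-theoretic description of $\pi_0\Aut(X_M)$'' you invoke is not a known or easy fact; it is essentially equivalent in difficulty to the theorem itself, and you do not supply it. Merely listing $\Aut(G)$, self-equivalences of $S^p\times S^{n-p}$, and Hatcher--Lawson twists does not describe $\pi_0\Aut(X_M)$; homotopy self-equivalences of a connected sum do not decompose this way.

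The paper avoids all of this by a different mechanism. Instead of analyzing self-equivalences of the stabilized manifold, it proves a geometric converse to Hatcher--Lawson (Lemma \ref{lm:MNhcob}): if $\pi_p(M)=0$ for some $2\le p<n/2$ and $M\#(S^p\times S^{n-p})\approx N\#(S^p\times S^{n-p})$, then $M$ and $N$ are themselves $h$-cobordant with torsion represented by a \emph{unit}. This is proved by a direct handle analysis of the cobordism built from the two stabilizing traces and the homeomorphism; the hypothesis $\pi_p(M)=0$ is essential. The manifolds are then chosen to be spherical space forms with quaternion fundamental group $G$ such that (a) $\pi_p(M)=0$ for all $p<n/2$ (automatic for space forms), (b) $I(M)$ is trivial (by \cite{KS99}), and (c) $Wh(G)$ contains classes not representable by units (\cite[Thm.\,10.8]{oliver}). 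Taking $N$ $h$-cobordant to $M$ with such a non-unit torsion, a stabilized homeomorphism would, by the lemma, make $M$ and $N$ also $h$-cobordant with unit torsion; gluing the two $h$-cobordisms then produces a nontrivial inertial $h$-cobordism of $M$, contradicting (b). Your proposal lacks any analog of Lemma \ref{lm:MNhcob}, lacks the $\pi_p(M)=0$ hypothesis that makes such a lemma provable, and lacks the triviality of $I(M)$, so it does not close.
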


In contrast to the problem of inertial $h$-cobordisms one can also ask how often realization of Whitehead torsion leads to
{\em non}--inertial $h$-cobordisms, \ie how many different homeomorphism classes of manifolds  that are $h$-cobordant to
a given manifold $M$.  For example, Milnor proved that for even--dimensional manifolds with finite fundamental group, only finitely
many homeomorphism classes can be realized \cite[Thm. 11.5]{mwt}, but   in the odd case he proves that there sometimes 
can be infinitely many \cite[Cor. 12.9]{mwt}.  We will generalize this result and prove

\begin{thm}\label{thm:infex}Let $G$ be a finite group such that $Wh(G)$ is infinite.  Then in every odd dimension $\geqslant 5$ there are manifolds $M$ with
fundamental group $G$  such that there are infinitely many distinct homeomorphism classes of manifolds $h$-cobordant to $M$.
\end{thm}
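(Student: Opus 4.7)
The plan is to adapt Milnor's lens-space argument \cite[Cor.\ 12.9]{mwt} to an arbitrary finite $G$ with $Wh(G)$ infinite, using Reidemeister torsion as the detecting invariant.

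Fix an odd integer $n\geqslant 5$. First I would produce a closed $n$-manifold $M$ with $\pi_1(M)\cong G$ that is a rational homology sphere, so that for every nontrivial irreducible complex representation $\rho$ of $G$ the Reidemeister torsion $\Delta_\rho(M)\in\mathbb C^\times$ is well-defined modulo a finite group of units. Such an $M$ exists for every finite $G$: thicken a $2$-complex presenting $G$ in $\R^{n+1}$, take its boundary, and surger away excess rational homology (using $H_i(G;\Q)=0$ for $i\geqslant 1$ when $G$ is finite). By the standard realization theorem, every $\tau\in Wh(G)$ arises as $\tau(W,M)$ for some $h$-cobordism $W=W_\tau$ with one end equal to $M$; let $N_\tau$ denote the opposite end.

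Next I would distinguish the $N_\tau$ via Reidemeister torsion. For each $\rho$ the standard composition formula gives
\[ \Delta_\rho(N_\tau)=\Delta_\rho(M)\cdot\chi_\rho(\tau), \]
where $\chi_\rho\colon Wh(G)\to\mathbb C^\times/(\text{units})$ is the character map induced by $\rho$. Assembled over all nontrivial irreducible complex representations, these give a map $\chi$ whose kernel is a torsion subgroup of $Wh(G)$, containing $\operatorname{SK}_1(\Z G)$ and the images of determinants of units in $\Z G$. Since $Wh(G)$ is infinite and finitely generated, it has positive free rank, so $\tau\mapsto\chi(\tau)$ takes infinitely many values. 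Passing to homeomorphism classes: a homeomorphism $N_\tau\cong N_{\tau'}$ can only alter the Reidemeister-torsion data by the action of the finite group combining $\operatorname{Out}(G)$ (permuting representations) with the unit ambiguity. Hence $\tau\mapsto[N_\tau]$ has finite fibers, and infinitely many distinct values of $\chi(\tau)$ yield infinitely many homeomorphism classes among the $N_\tau$.

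The main obstacle --- and the reason the theorem is restricted to odd dimensions --- is a parity-sensitive algebraic input. In odd dimensions Poincar\'e duality constrains $\Delta_\rho(M)$ to the unit circle modulo units, compatibly with $\chi$ having free rank equal to that of $Wh(G)$; in even dimensions the duality constraint is stronger and can force the analogous image to be finite, which is what underlies Milnor's opposite finiteness result \cite[Thm.\ 11.5]{mwt}. Carrying out this parity-sensitive check --- verifying that $\chi$ has the advertised free rank in odd $n$ whenever $Wh(G)$ is infinite --- is the main technical task; the construction of $M$ and the finite-fiber argument are then routine extensions of Milnor's treatment of lens spaces.
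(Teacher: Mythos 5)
Your overall strategy—detect homeomorphism types of the opposite ends $N_\tau$ by Reidemeister ($R$-)torsion and use the fact that there are only finitely many isomorphisms of the finite group $\pi_1$—is exactly the paper's. But there are two real gaps, and one of them corrupts the parity analysis that you correctly identify as the crux.

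First, the key formula is wrong: the torsion of the natural homotopy equivalence $h\colon N_\tau \to M$ is $\tau(h) = -\tau + (-1)^n\bar\tau$, so the correct relation is
$$h_*(\Delta(N_\tau)) = \Delta(M) + \tau - (-1)^n\bar\tau,$$
not $\Delta(N_\tau) = \Delta(M)\cdot\chi_\rho(\tau)$. The missing $\bar\tau$ term is precisely where the parity enters. Wall's theorem that the standard involution acts trivially on $Wh'(G) = \mathrm{im}\bigl(Wh(G) \to K_1(\Q G)/\pm G\bigr)$ for finite $G$ then gives $h_*(\Delta(N_\tau)) = \Delta(M) + 2\tau$ when $n$ is odd, and $\Delta(M)$ (constant!) when $n$ is even. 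With your formula $\Delta(N_\tau) = \Delta(M)\cdot\chi_\rho(\tau)$ there is no dependence on $n$ at all, and your argument would ``prove'' infinitely many types in every dimension, contradicting Milnor's finiteness theorem in the even case. Your explanation that ``Poincar\'e duality constrains $\Delta_\rho(M)$ to the unit circle'' is not the mechanism; the free rank of the character map on $Wh(G)$ is a purely algebraic quantity independent of $n$. The dimension enters only through the $(-1)^n\bar\tau$ term of the duality formula for $h$-cobordism torsion, together with Wall's involution result.

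Second, ``thicken a 2-complex presenting $G$ in $\R^{n+1}$, take its boundary, and surger away excess rational homology'' is not a proof. What is needed for $R$-torsion to be defined is that $G$ acts \emph{trivially} on $H_*(\widetilde M;\Q)$; $M$ being a $\Q$-homology sphere alone only gives $H_*(\widetilde M;\Q)^G \cong H_*(S^n;\Q)$, which says nothing about nontrivial $\Q G$-summands. Starting from the boundary of a thickened $2$-complex, the universal cover has non-trivial $G$-module homology in middle degrees (coming from the relation module of the presentation via Lefschetz duality), and killing it by surgery has obstructions in $L^h_*(\Q G)$ in the middle range. The paper handles this via Swan's periodic resolutions and $p$-local surgery with the computation $L^h_3(\Q G) = 0$ (Pardon), which only works directly for $m\equiv 3\bmod 4$; dimensions $\equiv 1\bmod 4$ require taking products with $S^2$, and dimension $5$ needs Weinberger's separate result. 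None of this is routine and your sketch does not engage with the surgery obstruction.
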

This theorem and its proof are motivated by Milnor's notion of {\em special manifolds} \cite[\S 12]{mwt}, and we show that between 
manifolds satisfying certain conditions there are only finitely many different $h$-cobordisms. For some manifolds $M$ 
(\eg spherical space forms) a much stronger statement is true:  there is at most one such 
$h$-cobordism between two of them.    However, we  also show (end of Section \ref{sec:abelian}) that there are groups such that
from any odd-dimensional manifold with these fundamental groups there are non-trivial inertial $h$-cobordisms.   Thus the conclusion of Theorem \ref{thm:infex} is in some sense best possible.\par
The notion of special manifolds was based on an algebraic assumption that
turned out to be incorrect (see Remark after Example 1.6 of \cite{mwt}), although it holds for cyclic groups, which were
Milnor's main examples.  In Section \ref{sec:special} we make some more comments on this, and we suggest a variant
 that circumvents the algebraic problem, thus rendering Milnor's discussion basically correct, with
appropriate modifications.\par
Milnor's special manifolds were assumed to have finite abelian fundamental groups, and the most striking assertions were for
the odd-dimensional case. This prompted us to look closer at $h$-cobordisms between  {\em arbitrary} orientable, odd-dimensional
manifolds with a finite abelian fundamental group.  In view of examples \ref{ex:lens} and  \ref{ex:K2} below, our Theorem \ref{thm:abelian},
asserting the triviality of "strongly inertial" such $h$-cobordisms, seems to be the best and most general substitute
for Theorem 12.8 in \cite{mwt}. 

\section{General remarks on  $h$-cobordism and torsion}\label{sec:general}

We use the notation $(W;M,M')$ for an $h$-cobordism $W$ with boundary manifolds $M$ and $M'$. Recall that this means that 
$W$ is a manifold with two boundary components $M$ and $M'$, each of which is a deformation retract of $W$. More 
specifically, we will
think of this as an $h$-cobordism {\em from} $M$ {\em to} $M'$, thus distinguishing it from the {\em dual} $h$-cobordism
$(W;M',M)$. Since the pair $(W,M)$ obviously determines $M'$, we will also sometimes use the simpler notation $(W,M)$ for 
$(W;M,M')$. One may allow $M$ to have boundary and require everything to be fixed along  $\partial M$, but to keep the 
notation simpler  we will assume $M$ and $M'$  closed in this paper.\par 
Let $\mathcal H(M)$ be the set of homeomorphism classes relative $M$ of $h$-cobordisms from $M$.\par

For a path connected space $X$, denote by $Wh(X)$ the Whitehead group $Wh(\pi_1(X))$.  Note that this is 
independent of choice of base point of $X$, up to unique isomorphism. 
The $s$-cobordism theorem says that if $M$ is compact, connected and of dimension at least 5 there is a one-one correspondence between $\H(M)$ and $Wh(M)$, associating to the $h$-cobordism $(W;M,M')$ the Whitehead torsion $\tau(W,M)\in Wh(M)$ of the pair of spaces.  Note that Milnor \cite{mwt} places the torsion in the canonically isomorphic group $Wh(W)$, but as our emphasis will
be on the set of $h$-cobordisms from a fixed manifold $M$, it is more natural for us to follow the convention in \cite{cohen}.\par
One advantage of having all the torsion elements in the same group, is that it makes it easier to give a geometric description of the group operation in $Wh(M)$. 
Given an element $(W;M,M')\in \mathcal H(M)$, define a homotopy equivalence $h:M'\to M$ as the composition of the inclusion
$M'\subset W$ and a retraction $W\to M$.  The homotopy class of $h$ is uniquely defined, hence so is also the induced isomorphism
$h_*:Wh(M') \to Wh(M)$. Even though $h$ is only well-defined up to homotopy, we will refer to it as the "natural homotopy 
equivalence" asociated to $(W,M)$ (or $(W;M,M')$).\par
 Now let $\tau$ and $\sigma$ be two elements
of the Whitehead group, and represent $\tau$ as $\tau(W;M,M')$ for an $h$-cobordism $(W;M,M')$.  Similarly, represent 
the element $h_*^{-1}(\sigma)\in Wh(M')$ as $\tau(W';M',M'')$, and let $W''=W\cup W'$, glued along $M'$.  Then 
\begin{lm}\label{lm:add}
$\tau(W'';M,M'')=\tau+\sigma.$
\end{lm}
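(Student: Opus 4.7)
The plan is to reduce the lemma to the standard sum (additivity) formula for Whitehead torsion, together with an excision identification. First I verify that both inclusions in $M\subset W\subset W''$ are homotopy equivalences: $M\subset W$ by the assumption that $(W;M,M')$ is an $h$-cobordism, and $W\subset W''$ because any retraction $W'\to M'$ extends the identity of $W$ to a retraction $W''\to W$. The standard sum formula (see \eg \cite[Ch.~IV]{cohen}) then gives, in $Wh(M)$,
\[
\tau(W'',M)=\tau(W,M)+r_*\tau(W'',W),
\]
where $r_*:Wh(W)\to Wh(M)$ is induced by the retraction $r:W\to M$.

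Next I identify $\tau(W'',W)$ with $\tau(W',M')$. Since $W''=W\cup_{M'} W'$ as CW complexes, the cells of $W''$ not in $W$ coincide with the cells of $W'$ not in $M'$, so the relative cellular chain complex $C_*(\widetilde{W''},\widetilde{W})$ of universal covers is isomorphic, as a based $\Z[\pi_1(W'')]$-module complex, to $C_*(\widetilde{W'},\widetilde{M'})$. Hence $\tau(W'',W)=j_*\tau(W',M')$, where $j_*:Wh(M')\to Wh(W)$ is induced by the inclusion $M'\hookrightarrow W$.

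Finally I trace the identifications. By the definition of the natural homotopy equivalence $h:M'\to M$ associated to $(W;M,M')$ as the composition $M'\hookrightarrow W\xrightarrow{r} M$, we have $r_*\circ j_*=h_*$. Substituting $\tau(W,M)=\tau$ and $\tau(W',M')=h_*^{-1}(\sigma)$ yields
\[
\tau(W'',M)=\tau+h_*(h_*^{-1}(\sigma))=\tau+\sigma.
\]
The only delicate part is bookkeeping the identifications of the various Whitehead groups; the choice to represent $\sigma$ by $h_*^{-1}(\sigma)\in Wh(M')$ is made precisely so that the transported second term comes out to $\sigma$ on the nose rather than some $h_*$-twisted element. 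Consequently the lemma is essentially a geometric repackaging of additivity of Whitehead torsion under composition of $h$-cobordisms, and no serious technical obstacle arises beyond invoking the named formulas correctly.
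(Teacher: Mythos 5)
Your proof is correct and is essentially the same argument the paper invokes: the paper's one-line proof cites Cohen's (20.2) and (20.3), which are precisely the sum formula for a nested triple $M\subset W\subset W''$ and the excision identification $\tau(W'',W)=j_*\tau(W',M')$ that you spell out, and the final step $r_*j_*=h_*$ is exactly why the paper chose to glue on an $h$-cobordism with torsion $h_*^{-1}(\sigma)$ rather than $\sigma$.
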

\begin{proof} This follows from (20.2) and (20.3)  in \cite{cohen}.
\end{proof}
Henceforth we will often tacitly identify $\H(M)$ with the abelian group $Wh(M)$.\smallskip

Recall that there is an involution $\tau\mapsto \bar \tau$ on $Wh(M)$, induced by transposition of matrices and inversion of 
group elements \cite[\S 6]{mwt}.  
 An observation of great importance to us is the following relation between the Whitehead torsions of $(W;M,M')$ and $(W;M',M)$, 
where $M$ is orientable and has dimension $n$ \cite[p.\,394]{mwt}:
\begin{equation}\label{eq:dual}
h_*(\tau(W,M'))=(-1)^n\overline{\tau(W,M)}.
\end{equation}

A consequence of (\ref{eq:dual}) and Lemma \ref{lm:add} is the usual formula for the torsion of the {\em double} of 
an $h$-cobordism $(W;M,M')$ with torsion $\tau$: 
\begin{equation}\label{eq:double}
\tau(W\cup_{M'}W)=\tau+(-1)^n\bar \tau.
\end{equation}

\section{Inertial sets}

We are principally interested in computing the {\em inertial set} of a manifold $M$, defined as
$$I(M)=\{(W;M,M')\in \H(M)\,|\, M'\approx M\},$$
or the corresponding subset of $Wh(M)$.
As already noted by Hausmann \cite[Remark 6.2]{H80}, this set does not have good algebraic properties.  In particular, it
is not a subgroup of $Wh(M)$. From Lemma \ref{lm:add} we see that this means that the induced maps $h_*$ in general do not preserve inertial sets. However, there is a smaller set which does behave better.\par

\begin{defn}  
 The $h$-cobordism $(W;M,M')$ is called {\em strongly inertial} if the natural homotopy equivalence $h:M'\to M$ is homotopic
to a homeomorphism.

The set of strongly inertial $h$-cobordisms from $M$ is denoted $SI(M)$. 
\end{defn}

 It is easy to see directly that this is a subgroup of
$Wh(M)$, but the following digression puts this into the perspective of surgery theory.\par

Recall the structure sets   $\SH(M)$  (or $\SS(M)$) given by  arbitrary (or simple)  homotopy equivalences  $f:N\to M$ modulo
$h$-cobordisms ($s$-cobordisms).  Define an intermediate structure set $\SHS(M)$ as the set of arbitrary homotopy
equivalence $f:N\to M$, modulo $s$-cobordisms. Then the forgetful map $\SS(M)\to \SH(M)$ factors as an inclusion
$\SS(M)\subset \SHS(M)$ followed by a surjection $\SHS(M)\to \SH(M)$. \par
Define an action of $Wh(M)$ on $\SHS(M)$ as follows:  Let $\tau\in Wh(M)$ and let $f:N\to M$ be a homotopy equivalence. Then
$f_*^{-1}(\tau)\in Wh(N)$ is the torsion of a unique $h$-cobordism $(W;N,N')$ which defines a natural 
 homotopy equivalence $h:N'\to N$.  Let $\tau\cdot [f]= [f\circ h]$.\par
It is easily checked that this defines an action with quotient set $\SH(M)$. Moreover, the isotropy subgroup of the trivial
element $id:M\to M$ is precisely $SI(M)$.\smallskip

For any $h$-cobordism $(W;M,M')$, the double $(W\cup_{M'}W;M,M)$ is obviously strongly inertial. Thus, from formula
(\ref{eq:double}) we have
\begin{equation}\label{eq:inertinc}\{\tau+(-1)^n\bar \tau|\tau\in Wh(M)\}\subset SI(M)\subset I(M).
\end{equation}
One of our aims is to examine how these sets differ.

\section{Stabilization and proof of Theorem \ref{thm:noninertial}}

The proof is based on following lemma, which can be seen as a partial converse to Hatcher-Lawson's stability criterion  in the case $d=1$.

\begin{lm}\label{lm:MNhcob}Let $M$ be a manifold of dimension  $n$ with finite fundamental group $G$, and assume that $\pi_p(M)=0$ for some $p$ with 
$2\leqslant p< n/2$. Suppose that $N$ is another manifold, such that $M\#(S^p\times S^{n-p})\approx N\#(S^p\times S^{n-p})$.   
Then $M$ and $N$ are $h$-cobordant by an $h$-cobordism with Whitehead torsion represented by a unit in $\Z(\pi_1(M))$.
\end{lm}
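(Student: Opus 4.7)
The plan is to build a two-handle cobordism $W$ from $M$ to $N$ whose cellular chain complex over $\Lambda:=\Z(G)$ reduces to multiplication by a single unit. Write $P=S^{p}\times S^{n-p}$, and denote by $\alpha_M=S^p\times\{*\}$, $\sigma_M=\{*\}\times S^{n-p}$ the standard $p$- and $(n-p)$-spheres in the $P$-summand of $X:=M\#P$, with analogous $\alpha_N,\sigma_N$ in $N\#P$.

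\emph{Construction.} Attach a trivial $p$-handle $h^p$ to $M\times\{1\}$, so that the top of $(M\times I)\cup h^p$ is $X$ and $\sigma_M$ is the belt sphere of $h^p$. Then attach a $(p+1)$-handle $h^{p+1}$ to $X$ along $\phi^{-1}(\alpha_N)$; the codimension is $n-p\ge 3$ (from $p\ge 2$ and $p<n/2$), so this attaching sphere may be taken locally flat, and its normal microbundle is trivial with a framing transported from the standard framing of $\alpha_N$ via $\phi$. Set $W:=(M\times I)\cup h^p\cup h^{p+1}$. The top of $W$ is $X$ with surgery on $\phi^{-1}(\alpha_N)$, which by $\phi$-equivariance is the same as surgery on $\alpha_N$ in $N\#P$; since $\alpha_N$ is the canonical $p$-sphere of the $P$-summand, this surgery undoes the connected sum and returns $N$. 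Thus $W$ is a cobordism from $M$ to $N$.

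\emph{Computation of $\pi_p(X)$.} Since $G$ is finite, the universal cover is $\widetilde X=\widetilde M\#_G P$. Let $\widetilde M'\subset \widetilde X$ be $\widetilde M$ with $|G|$ open disks removed; the cofibre is $\widetilde X/\widetilde M'\simeq\bigvee_G P$. Because $\widetilde M'$ is simply connected and $\bigvee_G P$ is $(p-1)$-connected (using $p<n-p$), Blakers--Massey excision yields
\[
\pi_p(\widetilde X,\widetilde M')\cong\pi_p(\bigvee_G P)=\bigoplus_G\pi_p(P)=\bigoplus_G\Z=\Lambda
\]
as $\Lambda$-modules — free of rank one over $\Lambda$ since higher Whitehead products of the generating $p$-classes lie in degrees $\ge 2p-1>p$ once $p\ge 2$. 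Together with $\pi_p(\widetilde M')\cong\pi_p(\widetilde M)=\pi_p(M)=0$ (this is where the hypothesis enters, via the fact that removing disks does not affect $\pi_p$ for $p\le n-2$) and the vanishing of the pair boundary on the sphere generators, the long exact sequence of $(\widetilde X,\widetilde M')$ gives $\pi_p(X)\cong\Lambda$ generated by $\alpha_M$. The same reasoning gives $\pi_p(N\#P)\cong\Lambda$ generated by $\alpha_N$, so $\phi_*$ is a $\Lambda$-linear isomorphism of free rank-one modules, necessarily multiplication by a unit $u\in\Lambda$. Equivalently, $[\phi^{-1}(\alpha_N)]=u^{-1}\cdot[\alpha_M]$ in $\pi_p(X)$.

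\emph{Chain complex and torsion.} The cellular complex $C_*(\widetilde W,\widetilde M)$ has only $C_p=C_{p+1}=\Lambda$ nonzero. The differential $\partial_{p+1}$ is the equivariant intersection of the attaching sphere of $h^{p+1}$ with the belt sphere of $h^p$; lifting to $\widetilde X$ and using that lifts of $g\alpha_M$ meet only the $g$-translate of $\sigma_M$, one reads off that $\partial_{p+1}$ is multiplication by $u^{-1}$. Since $u^{-1}$ is a unit, $H_*(W,M;\Lambda)=0$; as $p\ge 2$ also gives $\pi_1(W)=\pi_1(M)$, the cobordism $W$ is indeed an $h$-cobordism, and $\tau(W,M)\in Wh(G)$ is the class of the $1\times 1$ matrix $(u^{-1})$, hence represented by a unit of $\Z(G)$, as required. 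The main obstacle is the middle paragraph: producing $\pi_p(M\#P)\cong\Lambda$ from the weak hypothesis $\pi_p(M)=0$ (not $(p-1)$-connectedness of $M$), which is precisely why the Blakers--Massey range forces $2\le p<n/2$.
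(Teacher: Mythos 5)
Your construction of $W$ and your reduction to a two-term chain complex with a single differential $\partial_{p+1}$ matches the paper's: the paper glues $W_1=(M\times I)\cup h^p$ to $W_2=(N\times I)\cup h^{n-p}$ along the homeomorphism, which after turning $W_2$ upside down is exactly your $(M\times I)\cup h^p\cup h^{p+1}$, and both proofs reduce to showing that the class of the attaching $p$-sphere of the top handle generates $\pi_p(M\#P)\cong\Lambda$.

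However, there is a genuine gap at the sentence ``The same reasoning gives $\pi_p(N\#P)\cong\Lambda$ generated by $\alpha_N$.'' The reasoning you used for $M\#P$ relied essentially on the hypothesis $\pi_p(M)=0$; no such hypothesis is assumed about $N$, so the same reasoning does \emph{not} apply. What you actually know at this point is the splitting
$\pi_p(N\#P)\cong\pi_p(N)\oplus\Lambda\cdot[\alpha_N]$
coming from the long exact sequence of the pair (exactly as for $M$, but now without the vanishing of the first summand), plus the abstract isomorphism $\pi_p(N\#P)\cong\pi_p(M\#P)\cong\Lambda$ induced by the homeomorphism $\phi$. To conclude that $\alpha_N$ is a generator you must first show $\pi_p(N)=0$, and for this you need an extra step: the isomorphism $\Lambda\cong\pi_p(N)\oplus\Lambda$ of finitely generated abelian groups (finite because $G$ is finite) forces $\pi_p(N)=0$ by the classification of finitely generated abelian groups. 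This is precisely the step the paper carries out explicitly, and it is arguably the heart of the lemma — it is the mechanism by which a homotopy-theoretic hypothesis on $M$ alone controls $N$. Without it, you have not shown that $[\phi^{-1}(\alpha_N)]$ is a unit multiple of $[\alpha_M]$, hence not that $\partial_{p+1}$ is an isomorphism, hence not that $W$ is an $h$-cobordism at all. Supplying this one paragraph closes the gap; the rest of your argument is sound (modulo the small imprecision that $\widetilde X/\widetilde M'$ is not literally $\bigvee_G P$, only $p$-equivalent to $\bigvee_G S^p$, which is all you use).
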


\begin{proof}  Let $f:N\#(S^p\times S^{n-p})\to M\#(S^p\times S^{n-p})$ be a homeomorphism. Let 
$W_1=(M\times I)\cup (D^p\times D^{n-p+1})$ be $M\times I$ with a $p$--handle attached trivially inside a disk in $M\times\{1\}$.
Then $W_1$ is a cobordism between $M$ and $M\#(S^p\times S^{n-p})$, and the pair $(W_1,M)$ is homotopy equivalent to 
$(M\vee S^p,M)$. Dually, we can also think of $W_1$ as obtained from  $M\#(S^p\times S^{n-p})$ by adding an $(n-p+1)$--handle to
 $(M\#(S^p\times S^{n-p}))\times I$ attached along a neighborhood of  $\{y\}\times S^{n-p}\times \{1\}$, for some $y\in S^p$.\par

 Similarly we get a cobordism 
$W_2=N\times I\cup D^{p+1}\times D^{n-p}$ from $N$  to $N\#(S^p\times S^{n-p})$ by adding a trivial $(n-p)$--handle to
$N\times I$, {\em or} a $(p+1)$--handle to $(N\#(S^p\times S^{n-p}))\times I$ . Now glue $W_1$ and $W_2$ using the 
homeomorphism $f$ to obtain a cobordism $W$ between $M$ and $N$.  We claim that under conditions as in the Lemma, $W$ is an $h$-cobordism with torsion represented by a unit.\par

In fact, as we have just observed, we can think of  $W$ as built up from $M\times I$ by
adding one handle of index $p$ and then one of index $p+1$. Since $p\geqslant 2$, all inclusion relations between the cobordisms
and their boundaries are  $\pi_1$--isomorphisms. Denote this common fundamental group by $G$. \smallskip

\begin{rem} Here, and several places in the following, there is an issue of choice of basepoint. Thus, the fundamental groups involved 
are not "the same", but it is not hard to see that basepoints can be chosen coherently such that there are canonical isomorphisms between the
groups.
More specifically, in $M$ and $N$ and the corresponding connected sums we can choose basepoints lying in the connecting $(n-1)$-spheres.
The homeomorphism $f$ can be isotoped to one preserving the basepoints, and then an obvious curve in  $W$ containing all the basepoints
can be used to compare the various groups in the standard manner. Finally, a choice of lifting of this curve to $\widetilde W$ gives compatible 
coherent choices of basepoints and isomorphisms for the universal covers.  Henceforth we choose to drop these choices from the notation and identify all the fundamental groups with $G=\pi_1(M)$.
\end{rem}

The relative homology of the pair $(\widetilde W,\widetilde M)$  of universal covering spaces can be computed (as $\Z G$-module) as the 
homology of the chain complex
\begin{equation}
\cdots\to 0\to C_{p+1}\xrightarrow{d_{p+1}} C_p\to0\to\cdots,
\end{equation}
where $C_p=H_p(\widetilde W_1,\widetilde M)$ and $C_{p+1}=H_{p+1}(\widetilde W,\widetilde W_1)$. These are both free $\Z G$-modules
of rank one with bases given by (liftings of) the cores of the two handles. Consequently, if we show that $d_{p+1}$ is an isomorphism, it is given by multiplication by a unit  $u\in \Z G$.  Then  $W$ will be an $h$-cobordism with Whitehead torsion represented by $u$, 
up to sign.\par

Recall that $d_{p+1}$ is a connecting homomorphism in the long, exact homology sequence of the triple $(\widetilde W, \widetilde W_1,
\widetilde M)$ and factors as
$$H_{p+1}(\widetilde W,\widetilde W_1)\to H_{p}(\widetilde W_1)\to H_{p}(\widetilde W_1,\widetilde M).$$

Via the excision isomorphism $H_{p+1}(\widetilde W,\widetilde W_1)\approx H_{p+1}(\widetilde W_2,\widetilde{N\#(S^p}\times S^{n-p}))$,
this is easily seen to be the same as the composition
\begin{multline} H_{p+1}(\widetilde W_2,\widetilde{N\#(S^p}\times S^{n-p}))\to H_{p}(\widetilde{N\#(S^p}\times S^{n-p}))\to\\
\xrightarrow{f_*}
H_{p}(\widetilde{M\#(S^p}\times S^{n-p}))\xrightarrow{i_*} H_p(\widetilde W_1)\to H_p(\widetilde W_1,\widetilde M),
\end{multline}
where $i$ is the obvious inclusion. We want to prove that the composite map is an isomorphism, although the intermediate maps may not be.
\par
The following diagram compares this with the corresponding maps on homotopy groups, via the Hurewicz homomorphisms for
the universal covering spaces:

\begin{multline}{\xymatrix{ H_{p+1}(\widetilde W_2,\widetilde{N\#(S^p}\times S^{n-p}))\ar[r] & H_{p}(\widetilde{N\#(S^p}\times S^{n-p}))\ar[r]& \\
\pi_{p+1}(W_2, N\#(S^p\times S^{n-p}))\ar[u]_{h_1}\ar[r]^{\delta} & \pi_p({N\#(S^p}\times S^{n-p}))\ar[u]\ar[r]&
}}\\
{\xymatrix{&\ar[r]^<<<<{f_*}&H_{p}(\widetilde{M\#(S^p}\times S^{n-p}))\ar[r]^>>>>{i_*}& H_p(\widetilde W_1)\ar[r]& H_p(\widetilde W_1,\widetilde M)\\
&\ar[r]^<<<<{f_*}& \pi_{p}({M\#(S^p}\times S^{n-p}))\ar[u]\ar[r]^>>>>{i_*} &\pi_p( W_1)\ar[r]^{j_*}\ar[u]& 
\pi_p(W_1,M)\,.\ar[u]_{h_2}
}}\end{multline}

First, note that since the pair $(\widetilde W_2,\widetilde{N\#(S^p}\times S^{n-p}))$ is  $p$-connected and 
$(\widetilde W_1,\widetilde M)$ is $(p-1)$-connected, the maps denoted $h_1$ and $h_2$ are isomorphisms, by the relative Hurewicz theorem.
Therefore we are done if we can prove that each of the lower horizontal maps are isomorphisms.\par

We start from the right,  with $j_*$.  Since $W_1\simeq M\vee S^p$ with $p\ge2$, the long exact homotopy sequence for the pair $(W_1,M)$ splits up into split, short exact pieces
$$0\to \pi_k(M)\to \pi_k(W_1)\xrightarrow{j_*} \pi_k (W_1,M)\to 0.$$
 By the assumption $\pi_p(M)=0$, $j_*$ is an isomorphism for $k=p$.\par

To see that $i_*: \pi_{p}({M\#(S^p}\times S^{n-p}))\to\pi_p( W_1)$ is an isomorphism, we  first observe that  we can think 
of $W_1$ as obtained from ${M\#(S^p}\times S^{n-p})$ by attaching an $(n-p+1)$-handle. Then the assertion follows since, 
by the assumption on $p$,  we have $n-p>p$.\par

Since $f$ is a homeomorphism, $f_*$ is automatically an isomorphism.\par
It remains to consider $\delta$. We now know that both source and target are free $\Z G$-modules of rank 1. Moreover, it is clear that 
we can choose a generator
of $\pi_{p+1}(W_2, N\#(S^p\times S^{n-p}))$ which is mapped to the element in $\pi_p({N\#(S^p}\times S^{n-p}))$ represented by
a map of the form
$$g:S^p=S^p\times\{y\}\subset N\#(S^p\times S^{n-p}),$$ 
for some $y\in S^{n-p}$. We need to show that $\pi_p({N\#(S^p}\times S^{n-p}))$ is generated by $[g]$.\par

Just as for $M\#(S^p\times S^{n-p})$ we have isomorphisms 
$$\pi_p({N\#(S^p}\times S^{n-p}))\approx \pi_p(N\vee S^p)\approx \pi_p(N)\times \pi_p(N\vee S^p, N)\approx \pi_p(N)\times \Z G,$$
with the $\Z G$--factor  generated by the image of $[g]$. But  this is an isomorphism of the form 
$\Z G\approx \pi_p(N)\times \Z G$, and it follows by the classification of 
finitely generated abelian groups that $\pi_p(N)$ must vanish. Hence $[g]$ generates  $\pi_p({N\#(S^p}\times S^{n-p}))$.
\end{proof}

To obtain examples of manifolds as in Theorem \ref{thm:noninertial}, we can now take $M$ to be a spherical space form with fundamental
group $G$ such that not every element of $Wh(G)$ is represented by a unit.  Examples of such groups include \eg quaternion groups 
$Q_{16p}$ with $p\equiv -1\mod 8$, or $Q_{4p}$ with $p$ a prime with even class number $h_p$ \cite[Thm. 10.8]{oliver}.  These groups are
fundamental groups of spherical space forms of every dimension $4k+3$, and spherical space forms of dimension at least 5 certainly satisfy the conditions of Lemma \ref{lm:MNhcob}. By \cite{KS99} they also have trivial  inertial sets $I(M)$, and we claim that two such manifolds which are $h$-cobordant with Whitehead torsion not represented by a unit cannot also be $h$-cobordant
with Whitehead torsion which is represented by unit.\par
To verify this claim, let $\tau_1= \tau(W_1;M,M_1)$  and $\tau_2= \tau(W_1;M,M_2)$ be such that $\tau_2$ is represented by a unit and $\tau_1$ is not, and suppose $f:M_1\to M_2$ is a homeomorphism.  Also, let $h_1:M_1\subset W_1\to M$ and 
$h_2:M_2\subset W_2\to M$ be the natural homotopy equivalences. 
Now we construct an inertial $h$-cobordism $(V;M,M)$ by gluing $W_1$ and $W_2$ along $M_1$ and $M_2$ using the homeomorphism $f$.  By the discussion in Section \ref{sec:general} the torsion of $V$ is 
$$\tau(V,M)=\tau_1+(-1)^n {h_1}_*f_*{h_2}_*^{-1}(\bar\tau_2),$$ 
which must be trivial by the choice of $M$.  Thus 
\begin{equation}\label{eq:tau12}\tau_1=-(-1)^n {h_1}_*f_*{h_2}_*^{-1}(\bar\tau_2).
\end{equation} 

But if $\tau_2$ is represented by a unit, then  $\pm{h_1}_*f_*{h_2^{-1}}_*(\bar\tau_2)$ is also represented by a unit.
Hence  equation (\ref{eq:tau12}) is impossible.
\qed

\section{Proof of Theorem \ref{thm:infex}}

The idea is to use $R$--torsion as in Milnor's proof of Theorem 12.9 in \cite{mwt}, applied to certain manifolds constructed using work 
by Pardon \cite{P}. 
We start with a short discussion of the aspects of $R$--torsion that we need.  \par
The $R$--torsion is an invariant defined for finite CW-complexes $X$ with finite fundamental group, such that $\pi_1(X)$ 
acts trivially on rational homology of the universal covering $\widetilde X$.  Write $\pi_1(X)=G$ and let $\Sigma\in \Q G$ be the 
sum of the group elements of $G$.   Then there is a splitting of rings $\Q G\approx (\Sigma)\oplus \Q_G$, where 
$(\Sigma)$ is the (two-sided) ideal generated by
$\Sigma$ and $\Q_G=\Q G/(\Sigma)$. This splitting induces splittings  
$$C(\tilde X)\approx \Sigma C\oplus C_G$$
of cellular chain complexes, where $C_G$ is acyclic by the assumption on $X$. It is also free and based over the ring $\Q_G$, and so
has a torsion well defined in $K_1(\Q_G)/(\pm G)$.  This is the $R$--torsion $\Delta(X)$.
We will denote $K_1(\Q_G)/(\pm G)$ by $Wh(\Q_G)$.  There is an obvious homomorphism from $Wh(G)$ to $Wh(\Q_G)$ which factors 
through an injection $Wh'(G)\to Wh(\Q_G)$, where, as usual, $Wh'(G)$ denotes the image of $Wh(G)$ in $K_1(\Q G)/(\pm G)$.  \par
Now suppose $Y$ is a space homotopy equivalent to $X$, and let $h:Y\to X$ be a homotopy equivalence. Then we can define the
$R$--torsions of $Y$ and $X$, and the same procedure gives a torsion class $\Delta(M_h,Y)\in Wh(\Q_G)$ which is the image of 
the torsion $\tau(h)\in Wh(G)$ of the homotopy equivalence $h$. We will keep the notation $\tau(h)$ for $\Delta(M_h,Y)$.
(Here $M_h$ is the mapping cylinder of $h$.)\par
Now we can compare the $R$--torsions, using \cite[Theorem. 3.1]{mwt}, applied to the short exact sequence of chain complexes
of $\Q_G$--modules
$$0\to C_{G}(Y)\to C_G(M_h)\to C_G(M_h,X)\to 0.$$

Note that the $\Q_G$--module structures on the last two chain complexes is given by the obvious identification of the fundamental groups
of $M_h$ and $X$, and on $C_{G}(Y)$ the identification is given by $h_*$.  Taking this into account,  \cite[Theorem. 3.1]{mwt} gives:

\begin{lm}\label{lm:deltatau} If $h:Y\to X$ is a homotopy equivalence, then 
$$h_*(\Delta(Y))=\Delta(X)-\tau(h).$$
\end{lm}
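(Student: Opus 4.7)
The plan is to apply Milnor's multiplicativity theorem (Theorem~3.1 of \cite{mwt}) to the short exact sequence of based, acyclic $\Q_G$-chain complexes
\begin{equation*}
0\to C_G(Y)\to C_G(M_h)\to C_G(M_h,Y)\to 0,
\end{equation*}
in which the relative complex has $R$-torsion equal to $\tau(h)$ by definition, and $C_G(Y)$ is regarded as a $\Q_G$-module by transporting its natural $\Q_{\pi_1(Y)}$-structure along the isomorphism $h_*\colon\pi_1(Y)\xrightarrow{\sim} G$.

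First I would verify that all three complexes are acyclic over $\Q_G$. The outer terms inherit acyclicity from the hypothesis on $X$, since $Y$ and $M_h$ are both homotopy equivalent to $X$ and therefore share the same rational triviality of the $G$-action on the homology of their universal covers. The relative term is acyclic because $Y\hookrightarrow M_h$ is itself a homotopy equivalence, so $C(M_h,Y)\otimes\Q$ is acyclic and its $\Q_G$-summand inherits this. With all three complexes acyclic the homology-torsion correction in Milnor~3.1 vanishes, and the theorem reduces (up to a sign convention that must be fixed) to the additive identity
\begin{equation*}
\Delta\bigl(C_G(M_h)\bigr)=\Delta\bigl(C_G(Y)\bigr)+\Delta\bigl(C_G(M_h,Y)\bigr)
\end{equation*}
in $Wh(\Q_G)$.

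Next I would identify each of the three torsions. The middle term equals $\Delta(X)$, since the mapping cylinder $M_h$ collapses elementarily onto $X$ and elementary collapses preserve $R$-torsion. The right-hand term equals $\tau(h)$ by the definition recalled just above the lemma. The left-hand term equals $h_*(\Delta(Y))$: the complex $C_G(Y)$, viewed as a based complex over its natural ring $\Q_{\pi_1(Y)}$, has torsion $\Delta(Y)$ by definition, and pulling it over to $\Q_G$ via the ring isomorphism induced by $h_*$ carries this torsion to $h_*(\Delta(Y))$. Substituting the three identifications yields $\Delta(X)=h_*(\Delta(Y))+\tau(h)$, which rearranges to the formula of the lemma.

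The main point requiring care is the last identification: keeping coherent choices of basepoint in $Y$, $X$ and $M_h$, lifts to the respective universal covers, and preferred chain bases, so that the equality $\Delta\bigl(C_G(Y)\bigr)=h_*(\Delta(Y))$ holds on the nose in $Wh(\Q_G)=K_1(\Q_G)/(\pm G)$ rather than only up to an unwanted further indeterminacy; the sign conventions in passing from the multiplicative formulation of Milnor~3.1 to the additive one in $Wh(\Q_G)$ need similar attention. Once this bookkeeping is settled, the lemma follows immediately from the acyclic case of Milnor~3.1.
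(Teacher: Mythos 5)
Your proof follows the paper's approach exactly: apply Milnor's additivity theorem (Theorem 3.1 of \cite{mwt}) to the short exact sequence $0\to C_G(Y)\to C_G(M_h)\to C_G(M_h,Y)\to 0$ of acyclic, based $\Q_G$-complexes, identify the three torsions as $h_*(\Delta(Y))$, $\Delta(X)$, and $\tau(h)$, and rearrange. Note that the paper's displayed sequence has $C_G(M_h,X)$ where you have $C_G(M_h,Y)$; yours is the correct term, forced both by exactness and by the definition $\tau(h)=\Delta(M_h,Y)$ given just above the lemma, so this is simply a typo in the paper.
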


A corollary is the following version of \cite[Lemma 12.5]{mwt}:
\begin{cor}\label{cor:delta} Assume $h:Y\to X$ be a homotopy equivalence between spaces as above.\par
If $h$ is a simple homotopy equivalence, then $h_*(\Delta(Y))=\Delta(X)$.\par
If  $Wh(\pi_1(X))$ is torsion free, the converse is also true.
\end{cor}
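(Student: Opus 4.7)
The plan is to derive both statements essentially as immediate consequences of Lemma \ref{lm:deltatau}, so the work reduces to unpacking the coefficient change from $Wh(G)$ to $Wh(\Q_G)$.

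For the forward direction, I would argue as follows. By Lemma \ref{lm:deltatau}, $h_*(\Delta(Y)) = \Delta(X) - \tau(h)$, where $\tau(h)$ is interpreted via its image under the natural map $Wh(G) \to Wh(\Q_G)$. If $h$ is a simple homotopy equivalence, then $\tau(h) = 0$ already in $Wh(G)$, so its image in $Wh(\Q_G)$ vanishes and the identity collapses to $h_*(\Delta(Y)) = \Delta(X)$.

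For the converse, assume $Wh(G)$ is torsion-free and $h_*(\Delta(Y)) = \Delta(X)$. Then Lemma \ref{lm:deltatau} forces the image of $\tau(h)$ to be zero in $Wh(\Q_G)$. Since this image factors as $Wh(G) \to Wh'(G) \hookrightarrow Wh(\Q_G)$, with the second arrow injective by the definition of $Wh'(G)$ recalled just before Lemma \ref{lm:deltatau}, it suffices to know that $Wh(G) \to Wh'(G)$ is also injective in our setting. The kernel of this map is the standard $SK_1$-type torsion subgroup of $Wh(G)$ (elements killed upon passing from $\Z G$ to $\Q G$), and by the assumption that $Wh(G)$ is torsion-free, this kernel is trivial. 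Hence $\tau(h) = 0$ in $Wh(G)$ and $h$ is simple.

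The only real point to verify is that the kernel of $Wh(G) \to Wh(\Q_G)$ is indeed contained in the torsion subgroup of $Wh(G)$, which I would expect to be the main (minor) obstacle in writing up a clean argument. For $G$ finite, this is classical: the map $K_1(\Z G) \to K_1(\Q G)$ has kernel $SK_1(\Z G)$, which is a finite group, so after passing to the quotients by $(\pm G)$ the kernel in $Wh(G)$ remains finite and hence torsion. Under the torsion-freeness hypothesis on $Wh(G)$, this kernel must vanish, which is exactly what the argument needs.
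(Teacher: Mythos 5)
Your proof is correct and matches the paper's (implicit) argument: the corollary is a direct unwinding of Lemma~\ref{lm:deltatau}, with the only nontrivial point being that the kernel of $Wh(G)\to Wh(\Q_G)$ is (the image of) $SK_1(\Z G)$, which for finite $G$ is a finite group and hence dies when $Wh(G)$ is torsion-free. The paper handles this the same way — it recalls that $SK_1(\Z G)=\ker(K_1(\Z G)\to K_1(\Q G))$ and identifies the torsion-free hypothesis with $SK_1(\Z G)=0$, so that $Wh(G)=Wh'(G)\hookrightarrow Wh(\Q_G)$ — and your slightly more economical observation that one only needs $SK_1(\Z G)$ to be torsion (rather than equal to the full torsion subgroup) is also valid.
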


Now let $W$ be an $h$--cobordism between the $n$-manifolds $M$ and $M'$ with Whitehead torsion $\tau(W,M)=\tau$.
Let $j:M'\subset W$ be the inclusion and let $r:W\to M$ be a (deformation) retraction, and  set $h=r\circ j$. Then the 
Whitehead torsion of the composition $h=r\circ j$ is given by
\begin{equation}\label{eq:tauh}
\tau(h)=-\tau + (-1)^n\bar\tau.
\end{equation}

Using Lemma \ref{lm:deltatau} we get Milnor's formula for the relation between $\tau$ and the $R$--torsions of 
$M$ and $M'$ \cite[p. 405]{mwt}:
\begin{equation}\label{eq:deltatau}
h_*(\Delta(M'))=\Delta(M)+\tau-(-1)^n\bar \tau.
\end{equation}

(Milnor writes this multiplicatively, but we choose an additive notation, emphasizing that this takes place in an abelian group. He
also only gives the formula for $n$ odd.)\smallskip 

From now on we assume $n$ is odd and write $\pi_1(M)=G$.  Recall the result of Wall (\cite[7]{Wu}, see also \cite[Cor. 7.5]{oliver})  that  for
a finite group the standard involution acts trivially on  $Wh'(G)$.  Hence formula (\ref{eq:deltatau}) now reduces to
\begin{equation}\label{eq:delta}
h_*(\Delta(M'))=\Delta(M)+2\tau.
\end{equation}

Since $Wh'(G)$ is a nontrivial, free abelian group if $Wh(G)$ is infinite, it follows that given
$M$ (with the property that the fundamental group acts trivially on the rational homology of its universal cover)   
 we can realize infinitely many elements of the form $h_*(\Delta(M'))$.  We claim that these must represent infinitely many
homeomorphism types of manifolds $M'$.\par

Let $(M_1,h_1)$ and $(M_2,h_2)$ be two such choices, and assume $f:M_1'\to M_2'$ is a homeomorphism.
Then $f_*(\Delta(M_1)=\Delta(M_2)$ and hence ${h_2}_*(\Delta(M_2))=(h_2f)_*(\Delta(M_1))$ --- \ie 
${h_1}_*(\Delta(M_1))$ and ${h_2}_*(\Delta(M_2))$ are both images of the same element under homomorphisms
induced by isomorphisms $\pi_1(M_1')\approx \pi_1(M)$.  But there are only finitely many isomorphisms between given
finite groups.  Thus only finitely many elements ${h_2}_*(\Delta(M_2))$ can be realized by manifolds homeomorphic to
$M_1$. It follows that infinitely many homeomorphism types (in fact, simple homotopy types) occur at the other end of 
 $h$--cobordisms from $M$.\smallskip

It remains to construct such manifolds $M$, given a finite group $G$.  Equivalently,  we want to construct 
simply-connected manifolds $\widetilde M$ with free, $H_*(-;\Q)$-trivial $G$-actions.  Methods
for such constructions have been developed by Pardon \cite{P} and Weinberger \cite{WeI, WeII}.  For the sake of completeness, here
is a short sketch, following \cite {P}:\par
Let $p$ be a prime not dividing the order of $G$. A classical construction of Swan \cite{Sw} gives for every odd $m\geqslant 3$
a finite, simply-connected $CW$-complex $X$ such that (a) $X$ has a  free $G$-action,  and (b) $H_*(X;\Z_{(p)})
\approx H_*(S^m; \Z_{(p)})$.   In fact, from Serre $\mathcal C$-theory there is a $\Z_{(p)}$ homology isomorphisms $f:S^m\to X$ 
realizing (b). and $q:X\to X/G$ (the quotient map).   The quotient $X/G$ is then a $\Z_{(p)}$-Poincar\'e complex of formal dimension $m$, and the quotient map $q:X\to X/G$ is also a $\Z_{(p)}$ homology isomorphism.
Forming the composition $h=q\circ f$ defines a $p$-local normal map $h:(S^m, \nu)\to (X/G,\xi)$ (not necessarily of degree one), 
where $\xi$ is the trivial bundle.\par
Now $h$ has a surgery obstruction in $L^h_m(\Z_{(p)}G)$, hence also in $L^h_m(\Q G)$.  One shows that $L^h_3(\Q G)=0$,  thus rational surgery on $h$ can be completed if $m\equiv 3 \mod 4$.  This gives an $m/2$-connected manifold $\widetilde M^m$ such that
$H_*(\widetilde M^m;\Q)\approx H_*(S^m;\Q)$ with a free $G$-action, and we let $M^m=\widetilde M/G$.\par
For $m\equiv 1\mod 4$, take $N^m=M^{m-2}\times S^2$.  This gives examples in all odd dimensions $\geqslant 7$.  Note 
that  in dimensions $4k+3$ the manifolds are rational homology spheres, and in dimensions $4k+1$ they have rational homology
as $S^{4k-1}\times S^2$.   Weinberger's results \cite[Theorem 4.7]{WeII} generalize this and extend it to dimension 5,  providing 
examples which are rationally $S^3\times S^2$.

\section{Remarks on "special" manifolds}\label{sec:special}

Recall that Milnor \cite[p.\,404]{mwt} called a compact  manifold {\em special} if the fundamental group is finite abelian
and acts trivially on the rational homology groups of its universal covering space. The trivial action is needed for the definition 
of $R$-torsion, but the abelian assumption was based on the incorrect claim  that the Whitehead group of a finite abelian 
group $G$ is free. (See Example 1.6 and the following remark in \cite{mwt}.) 
Or, equivalently, that $SK_1(\Z G)=\ker(K_1(\Z G)\to K_1(\Q G))=0$. This is, of course, 
the case for $G$ cyclic, which was the most important case for Milnor, but the general statements about special manifolds
need the extra hypothesis.  \par
We would now like to redefine
a special manifold to be one with finite fundamental group $G$ whose Whitehead group has no torsion, \ie  $SK_1(\Z G)=0$,
and such that $G$ acts trivially on the rational homology of the universal covering space. Then the 
general  statements about special manifolds in \cite[\S 12]{mwt} hold with suitable modifications, with the same proofs.
The crucial observations are that now $Wh(G)=Wh'(G)\to Wh(\Q_G)$ is an injection, and the converse holds in 
Lemma \ref{cor:delta}. \par
In particular, we have (cf. \cite[Theorem 12.8]{mwt}):

\begin{thm}\label{thm:mwt12.8}
An inertial $h$-cobordism between odd-dimensional special manifolds of dimension $\geqslant 5$ which is compatible with the natural identifications of fundamental groups is a product.
\end{thm}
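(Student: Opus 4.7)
The plan is to combine equation (\ref{eq:delta}) with the homeomorphism invariance of $R$-torsion, and then apply the $s$-cobordism theorem. Let $(W;M,M')$ be an inertial $h$-cobordism between special manifolds of odd dimension $n\geqslant 5$, with torsion $\tau=\tau(W,M)\in Wh(G)$ where $G=\pi_1(M)$. Let $h:M'\to M$ be the natural homotopy equivalence from Section \ref{sec:general}, and let $f:M'\to M$ be a homeomorphism realizing the inertial hypothesis. The compatibility assumption means that $f_*$ and $h_*$ induce the same isomorphism $\pi_1(M')\to\pi_1(M)$, hence the same map $Wh(\Q_G)\to Wh(\Q_G)$, which I will simply denote $h_*$.

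First I would observe that $R$-torsion is a homeomorphism invariant, so $f_*(\Delta(M'))=\Delta(M)$ in $Wh(\Q_G)$. By the compatibility of $f_*$ and $h_*$ this reads $h_*(\Delta(M'))=\Delta(M)$. Next I would apply formula (\ref{eq:delta}), which holds because $n$ is odd and $G$ is finite, to get
$$h_*(\Delta(M'))=\Delta(M)+2\tau$$
in $Wh(\Q_G)$, where $\tau$ is viewed in $Wh(\Q_G)$ via the natural map $Wh(G)\to Wh(\Q_G)$. Subtracting the two equations gives $2\tau=0$ in $Wh(\Q_G)$.

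Now the revised notion of special manifold enters. The hypothesis $SK_1(\Z G)=0$ has two consequences: first, $Wh(G)=Wh'(G)\hookrightarrow Wh(\Q_G)$ is injective, so $2\tau=0$ already in $Wh(G)$; second, since the torsion subgroup of $Wh(G)$ for finite $G$ equals $SK_1(\Z G)$, the group $Wh(G)$ is torsion-free, and $2\tau=0$ forces $\tau=0$. The $s$-cobordism theorem, applicable because $\dim W=n+1\geqslant 6$, then gives $W\cong M\times I$ rel $M$.

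The main conceptual point is the identification $f_*=h_*$ on $Wh(\Q_G)$ supplied by the compatibility hypothesis; without it one would only control $\tau$ modulo the action of the automorphism group of $G$ on $Wh(\Q_G)$, and an inertial but non-compatible $h$-cobordism could well carry a nontrivial torsion. Once this identification is in place the argument is a direct assembly of the formulas from Sections \ref{sec:general} and on $R$-torsion already developed above.
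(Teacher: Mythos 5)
Your proof follows the same route as the paper's: combine the homeomorphism invariance of $R$-torsion (via compatibility of $f_*$ and $h_*$) with formula (\ref{eq:delta}) to obtain $2\tau=0$ in $Wh(\mathcal Q_G)$, then use $SK_1(\mathbb Z G)=0$ to conclude $\tau=0$ and invoke the $s$-cobordism theorem. The only difference is presentational — you spell out the torsion-freeness argument explicitly rather than leaving it implicit — so this matches the paper's proof.
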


That the inertial $h$-cobordism $(W;M,M')$ is 'compatible with the natural identifications of fundamental groups' means that 
there is a homeomorphism $f:M'\to M$ such that the map induced on Whitehead groups by $f$ and the natural homotopy equivalence
$h:M'\subset W\to M$ coincide.  Then $h_*(\Delta(M'))=\Delta(M)$ by Corollary \ref{cor:delta}, and  Theorem \ref{thm:mwt12.8} 
follows from (\ref{eq:delta}).\par

\begin{rem}
There are  examples of $h$-cobordisms between special 3-manifolds which are compatible with identifications of fundamental groups,
but not products. In fact, these $h$-cobordisms are even strongly inertial.  The first examples were found by Cappell and Shaneson
in \cite{CS1}.
\end{rem}
  
The assumption of compatibility of identifications of fundamental groups is stronger than being inertial, even in the case of cyclic
fundamental group, as illustrated by the following example (cf. also \cite{m61} and \cite{H80}):
\begin{ex}\label{ex:lens}
Many classical  three-dimensional lens spaces $L^3(p,q)$ admit self-homotopy equivalences $h$ with non-trivial Whitehead torsion
$\tau(h)$.   Choose one with fundamental group of odd order --- the simplest example is $L^3(5,1)$ and $h_*$ equal to multiplication by 2 on $\pi_1\approx\Z/5$ --- and consider the homotopy equivalence 
$$L^3\xrightarrow{h} L^3\xrightarrow{j} L^3\times D^3.$$
Here $D^3$ is the 3-disk and $j$ is the inclusion of $L^3\times\{0\}$.  The map $j\circ h$ is homotopic to a smooth
embedding $i:L^3\to L^3\times D^3$ by \cite{Hae}, and $i(L^3)\subset L^3\times D^3$ has trivial normal disk bundle $N$. (In fact,
 since $[L^3, BO(3)]=0$, every linear $D^3$-bundle over $L^3$ is trivial.)   Then $L^3\times D^3 -\text{int} N$
is an $h$-cobordism with torsion $\tau(h)$, and both boundary components are homeomorphic to $L^3\times S^2$.  
See also \cite[\S6]{H80}. \end{ex}

Note that the manifold $L^3\times S^2$ is still special, but the natural identification of fundamental groups is via $h$, which is not
homotopic to a homeomorphism.\par
 On the other hand, the condition of compatibility of fundamental groups is  weaker than being strongly inertial. 
In fact,  in the next section we will prove the following Theorem:

\begin{thm}\label{thm:abelian}
Suppose $M$ is a closed oriented manifold of odd dimension with abelian fundamental group of finite order.
Then every strongly inertial $h$-cobordism from $M$  is trivial.
\end{thm}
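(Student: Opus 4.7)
The plan is to extract algebraic constraints on $\tau := \tau(W,M) \in Wh(G)$, with $G = \pi_1(M)$, from the three hypotheses in turn: strong inertiality, odd $n$, and the finite abelian structure of $G$.

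First I would use the strongly inertial assumption: the natural homotopy equivalence $h\colon M' \to M$ is homotopic to a homeomorphism, so by Chapman's theorem on topological invariance of Whitehead torsion $\tau(h) = 0$ in $Wh(G)$. Substituting into formula (\ref{eq:tauh}) and using that $n$ is odd yields the key algebraic relation
\[
\tau + \bar\tau = 0 \quad \text{in } Wh(G).
\]

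Next I would project via the surjection $Wh(G) \to Wh'(G)$, whose kernel is $SK_1(\Z G)$. By Wall's theorem the involution is trivial on $Wh'(G)$, so the image of $\tau$ there satisfies $2\tau = 0$; since $Wh'(G)$ is torsion free, this image vanishes, and hence $\tau$ lies in the subgroup $SK_1(\Z G) \subset Wh(G)$. Up to this point only strong inertiality and the odd parity of $n$ have been used, and the same argument works for any finite $G$.

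The remaining and most delicate step is to show $\tau = 0$ in $SK_1(\Z G)$ under the finite abelian hypothesis on $G$, and I expect this to be the main obstacle. The plan is to appeal to the explicit structure theory of $SK_1(\Z G)$ for a finite abelian group and to the behaviour of the standard involution on it (induced by $g \mapsto g^{-1}$, transposition being trivial on $K_1$ of a commutative ring), combining the algebraic relation $\tau = -\bar\tau$ in $SK_1(\Z G)$ with the geometric characterisation of $SI(M)$ as the isotropy group of $[id_M]$ under the action of $Wh(G)$ on $\SHS(M)$. This combination should constrain which $SK_1$-classes can be realised as strongly inertial torsions and force the vanishing of $\tau$, thereby yielding the theorem.
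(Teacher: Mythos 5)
Your opening reduction is correct and worth stating: strong inertiality forces $\tau(h)=0$ (Chapman's theorem, since $h$ is homotopic to a homeomorphism), and with $n$ odd formula (\ref{eq:tauh}) gives $\tau+\bar\tau=0$. For finite abelian $G$ you can in fact skip the detour through $Wh'(G)$: by Bak the involution is already trivial on all of $Wh(G)$ when $G$ is abelian, so you directly get $2\tau=0$, hence $\tau\in SK_1(\Z G)$ since $Wh(G)/SK_1(\Z G)=Wh'(G)$ is torsion-free. Up to this point you are on firm ground, but this much is essentially formula (\ref{eq:inertinc}) read backwards, and it is not where the content of the theorem lies.

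The genuine gap is the last step, and you have not given a viable strategy for it. There is no purely algebraic route from ``$\tau\in SK_1(\Z G)$ and $2\tau=0$'' to ``$\tau=0$'': for $G=\Z/4\times\Z/4$ one has $Wh(G)=SK_1(\Z G)\cong\Z/2$, and the involution is trivial, so the constraint $\tau=-\bar\tau$ is vacuous on $SK_1$. Indeed Example \ref{ex:K2} produces an \emph{inertial} $h$-cobordism from a manifold with this fundamental group whose torsion is the nontrivial element, whose natural homotopy equivalence $h$ is simple, and which is compatible with the natural identifications of fundamental groups; the only thing that rules it out as \emph{strongly} inertial is Theorem~\ref{thm:abelian} itself. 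So any argument that only exploits $\tau(h)=0$ and the algebra of $Wh(G)$ is doomed. Your appeal to the ``geometric characterisation of $SI(M)$ as an isotropy group'' is not a plan: that characterisation is precisely the statement whose consequences one needs to compute, and computing them is the hard part.

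What the paper actually does is surgery-theoretic. After disposing of dimensions $1$ and $3$ separately, it views the strongly inertial $h$-cobordism as an element of $\mathcal S^h(M\times I)$ and proves the forgetful map $\mathcal S^s(M\times I)\to\mathcal S^h(M\times I)$ is \emph{onto}, by a diagram chase in the $s$- and $h$-surgery exact sequences using two nontrivial inputs: Bak's surjectivity of $L^s_{n+2}(G)\to L^h_{n+2}(G)$ in the odd case, and the Taylor--Williams refinement of the Hambleton--Milgram--Taylor--Williams surgery-obstruction formulas, which shows $L^s_{n+1}(G)\to L^h_{n+1}(G)$ is injective on the image of $\theta^s$ when the $2$-Sylow subgroup of $G$ is abelian (splitting into the cases $n+1\equiv 2$ and $n+1\equiv 0\pmod 4$, the latter via the elementary abelian $2$-subgroup). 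This gives that $W$ is $h$-cobordant rel boundary to $M\times I$, and only then does a duality-plus-trivial-involution torsion computation force $\tau(W,M)=0$. None of this machinery appears in your outline, and I see no way to avoid it.
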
  

\begin{ex}\label{ex:K2}
Let $K$ be a finite, 2-dimensional complex such that $\pi_1(K)=\Z/4\times\Z/4$.   For example, take $K$ to be the
2-torus $T^2=S^1\times S^1$ with two 2-disks attached along $S^1\times\{1\}$ and $\{1\}\times S^1$ by attaching 
maps of degree four.  Oliver's calculations \cite{oliver} show that $Wh(\Z/4\times\Z/4)\approx \Z/2$, and by \cite{Lat}
there exists a self-homotopy equivalence $f:K\to K$ with torsion $\tau(f)$ equal to the nontrivial element.\par
Now embed $K$ in Euclidean space $\R^{2k}$, $k\geqslant 3$ and let $N(K)$ be the topological regular neighborhood \cite{J}.
Approximate the composition $K\xrightarrow{h} K\subset N(K)$ by an embedding. By uniqueness of regular neighborhoods 
we now have an embedding $j:N(K)\xrightarrow{\subset} N(K)$, and $W=N(K)-j(\text{int}N(K))$ is then an inertial 
$h$-cobordism from $\partial N(K)$ with non-trivial torsion. Moreover, since there is only one possible isomorphism
between the  Whitehead groups involved, this $h$-cobordism is obviously compatible with the natural identifications of fundamental
groups.  But by Theorem \ref{thm:abelian} it can not be strongly inertial.
\end{ex}

\begin{rem}
Note that the natural (self-) homotopy equivalence $h:\partial N\subset W\to\partial N$
between the ends of the $h$-cobordism constructed in Example \ref{ex:K2} is simple and $h$-cobordant to the identity. 
However, by Theorem \ref{thm:abelian}, it is not homotopic to a homeomorphism.  From the point of view of surgery theory, this 
is an example of a negative answer to the following general and challenging question: "Suppose $f:N\to N$ is a self-homotopy equivalence of a 
compact manifold and $f$ is normally cobordant to the identity. Is $f$ homotopic to a homeomorphism?"  See \cite{CS} and 
\cite{KS95} for specific cases of this problem.
\end{rem}

 \section{Proof of Theorem \ref{thm:abelian}}\label{sec:abelian}

If the dimension of $M$ is 1, the claim is obvious.  If dim\,$M=3$, $M$ must be a lens space, and every $h$-cobordism is 
topologically a product by \cite{KS89}.  Consequently we may assume dim\,$M\geqslant 5$.  We will now use surgery theory and the work of Hambleton, Milgram, Taylor and Williams \cite{HMTW} to analyze
$SI(M)$. \par
A strongly inertial $h$-cobordism $(W;M,N)$ determines an element of the structure set $\SH(M\times I)$.  (By convention, if $V$
has boundary, an element $V'\to V$ of the structure set $\SS(V)$ or $\SH(V)$ is a homeomorphism on the boundary.)
This structure set sits in a diagram of Sullivan--Wall exact surgery sequences ($G=\pi_1(M)$):
$$\xymatrix{
\cdots\ar[r]&  L^s_{n+2}(G)\ar[r]^{\gamma^s}\ar[d]^{l_1} & \SS(M\times I)\ar[r]^{\eta^s}\ar[d]^{t} &
\N(M\times I)\ar[r]^{\theta^s}\ar[d]^= & L^s_{n+1}(G)\  \ar[d]^{l_0}  \\
\cdots\ar[r]&  L^h_{n+2}(G)\ar[r]^{\gamma^h} & \SH(M\times I)\ar[r]^{\eta^h} &
\N(M\times I)\ar[r]^{\theta^h} & L^h_{n+1}(G)\,.
}$$
This is a diagram of abelian groups and homomorphisms, where the addition in the groups in the middle is given by 
"stacking in the $I$-direction".  Moreover, we need the following two highly non--trivial facts:
\begin{enumerate}
\item Since $n+2$ is odd, the map labeled $l_1$ is surjective \cite{bak}.
\item When $n+1$ is even,  $l_0$ is injective on the image of $\theta^s$. (Proof below.)
\end{enumerate}
Given this, half of the standard proof of the five--lemma gives that the map $t:\SS(M\times I)\to \SH(M\times I)$ is surjective. Thus
$W$ is $h$-cobordant rel boundary to another $h$-cobordism with trivial torsion, hence to $M\times I$.  We want to conclude that
$W$ itself must have trivial torsion.\par
Let $(U; W,M\times I)$  be an $h$-cobordism and compute the torsion of the homotopy equivalence $M\subset U$ as a composition
in the two obvious ways:  $M\subset W\subset U$ and $M\subset M\times I\subset U$. Then we get (in $Wh(U)$)
$$ \tau(W\subset U)+j_*(\tau(M\subset W))=\tau(M\times I\subset U)+j'_*(M\subset M\times I)=\tau(M\times I\subset U),$$
where $j$ and $j'$ are the respective inclusion maps. But the duality formula gives
$$  \tau(W\subset U)=(-1)^{n+1}\overline{\tau(M\times I\subset U)}=\tau(M\times I\subset U),$$
since $n$ is odd and the involution on $Wh(G)$ is trivial when $G$ is abelian \cite{bak2}. It follows that $j_*(\tau(M\subset W))=0$.
But $j_*$ is an isomorphism, and $\tau(M\subset W)=0$ if and only if  $\tau(W,M)=0$.\par
It remains to establish assertion (2) above.  In other words; we need to prove that if $n+1$ is even and $\theta^h(x)=0$,
then also $\theta^s(x)=0$.  We are grateful to Ian Hambleton \cite{Ham}, who showed us how  calculations sketched by 
Taylor and Williams in the unpublished preprint \cite{TW} could be used to prove what is needed.\par

Recall that Theorem A of \cite{HMTW}  gives explicit formulas for the surgery obstructions $\theta^h$ of the form
$$\theta^h(x)=\theta_0(x)+\kappa^h_m(c(x)),\ m=1,2,3,4,$$
where $m\equiv n+1 \mod 4$.  Here  $\kappa^h_m:H_m(G;\Z/2)\to L^h_{n+1}(G)$ are universally defined homomorphisms and $\theta_0$ is the simply--connected surgery obstruction (index or Kervaire invariant if $n+1$ is even and trivial otherwise).  The element $c(x)$ is the image of $x$ under a certain homomorphism
$\N(M\times I)\to H_2(BG;\Z/2)$.\par
Taylor and Williams give similar formulas 
for $\theta^s$, provided that the finite group $G$ has abelian 2-Sylow subgroup \cite[Theorem 1.2]{TW}.
 More precisely, the homomorphisms $\kappa^h_m$ factor as
$$ H_m(G;\Z/2)\xrightarrow{\kappa^s_m} L^s_{n+1}(G) \xrightarrow{l_0} L^h_{n+1}(G),$$
and $\theta^s$ is given by
$$\theta^s(x)=\theta_0(x)+\kappa^s_m(c(x)).$$

Moreover, they show that in this case $\kappa^s_4$ is trivial. Hence (2) holds if $n+1\equiv 2\mod 4$,  since then $\theta^s$ and $\theta^h$ are equal.\par
  If $n+1\equiv 0\mod 4$ we need the more precise calculations of $\kappa_2^s$ and $\kappa_2^h$ in Section 5 of 
\cite{TW}.
We want to show that if $\kappa^h_2(y)=0$ then $\kappa^s_2(y)=0$.  By a well--known observation of 
Wall \cite[Thm. 12]{Wf},
it suffices to do this for the 2--Sylow subgroup $A$ of $G$.  Let $E\subset A$ be the elementary 2--group consisting of all
the elements of order two. By \cite[Theorem. 5.5]{TW} the sequence
$$H_2(BE;\Z/2)\to H_2(BA;\Z/2)\xrightarrow{\kappa^h_2} L^h_0(A)$$
is exact.  Hence,  if $\kappa^h_2(y)=0$, then $y$ comes from $H_2(BE;\Z/2)$. \par
Now consider the diagram 
$$\xymatrix{H_2(BE;\Z/2)\ar[r]^<<<<<{\kappa_2^s}\ar[d]  & L_0^s(E)\ar[r]^{l_0}\ar[d]& L_0^h(E)\ \ar[d] \\
H_2(BA;\Z/2)\ar[r]^<<<<<{\kappa^s_2}  & L_0^s(A)\ar[r]^{l_0}& L_0^h(A),
} $$
defined by naturality. The assertion now follows from \cite[Lemma 5.2]{TW}, which says that
$H_2(BE;\Z/2)\xrightarrow{\kappa_2^s}L_0^s(E)$ is trivial.\qed\smallskip

Observe that the conditions in Theorem \ref{thm:abelian} are only on the dimension and fundamental group of $M$, hence they
remain satisfied after connected sums with or products with simply connected manifolds.  It follows that in general $SI(M)$ and 
$I(M)$ behave very differently:

\begin{cor}
If dim $M$ is odd, $\pi_1(M)$ is abelian and $Wh(\pi_1(M))$ is nontrivial, then $SI(M\#_2(S^p\times_2( S^{n-p}))\ne 
 I(M\#_2(S^p\times S^{n-p}))$.
\end{cor}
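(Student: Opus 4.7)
The strategy is to invoke the two most recent results of the paper: use Theorem \ref{thm:abelian} to pin down $SI$ on one side, and use the Hatcher--Lawson stability criterion together with Vaserstein's theorem to pin down $I$ on the other side. Set $N = M \#_2(S^p\times S^{n-p})$ and write $G = \pi_1(M)$.

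First I would verify that Theorem \ref{thm:abelian} applies directly to $N$. Since $2\leqslant p\leqslant n-2$, both factors $S^p$ and $S^{n-p}$ are simply connected, so a van Kampen argument shows $\pi_1(N)=\pi_1(M)=G$, which is abelian and finite; moreover $\dim N = \dim M$ is odd. Thus every strongly inertial $h$-cobordism from $N$ is a product, i.e.\ $SI(N)=\{0\}$ inside $Wh(G)$.

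Next I would show that $I(N)=Wh(G)$. By Vaserstein's theorem \cite{V}, every element of $Wh(G)$ for a finite group $G$ can be represented by a $2\times 2$ invertible matrix over $\Z G$. The Hatcher--Lawson stability criterion (as recalled in the Introduction, with $d=2$) then guarantees that every $h$-cobordism with one end homeomorphic to $M\#_2(S^p\times S^{n-p})=N$ is inertial. In terms of torsion, every $\tau \in Wh(G)$ is realised by some $(W;N,N')$ with $N'\approx N$.

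Combining the two computations gives $SI(N)=\{0\}\subsetneq Wh(G) = I(N)$, where the strict inclusion uses the nontriviality hypothesis $Wh(G)\neq 0$. This completes the proof. There is no real obstacle here beyond bookkeeping — the content is entirely absorbed by Theorem \ref{thm:abelian} and the Hatcher--Lawson/Vaserstein combination — but the point worth emphasising is that the conditions of Theorem \ref{thm:abelian} (odd dimension, finite abelian $\pi_1$) are preserved by the stabilisation $M \leadsto M\#_2(S^p\times S^{n-p})$, while the inertial set jumps from a potentially complicated subset of $Wh(G)$ all the way up to the full group.
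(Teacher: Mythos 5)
Your proof is correct and follows essentially the same route the paper intends: Theorem \ref{thm:abelian} forces $SI(M\#_2(S^p\times S^{n-p}))$ to be trivial because connected sum with copies of $S^p\times S^{n-p}$ (with $2\leqslant p\leqslant n-2$) preserves the odd dimension and the finite abelian fundamental group, while the Hatcher--Lawson stability criterion combined with Vaserstein's $d=2$ result (both quoted in the Introduction) forces $I(M\#_2(S^p\times S^{n-p}))$ to equal all of $Wh(\pi_1(M))$, which is nontrivial by hypothesis.
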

In fact, in most cases one copy of $S^p\times S^{n-p}$ is enough.  For another example, take products $L^3\times S^2$ as
in example \ref{ex:lens}
\par
One could ask if $SI(M)$ is always trivial if $\dim M$ is odd and $\geqslant 5$, but this is not the case.  We observed 
in section \ref{sec:general} that if $\dim M$ is odd, then $SI(M)$ contains all elements of the 
form $\tau-\bar\tau$ (formula \ref{eq:inertinc}), and 
Oliver has constructed groups with non-trivial involutions on the Whitehead groups.  (See \cite[Proposition 24]{oliver80} and
\cite[Example 8.11]{oliver}.) These groups are then also examples of groups $G$ with the property that  every orientable manifold  
with fundamental group $G$ has non-trivial inertial set, as promised in the introduction. \par
We end this discussion with an interesting question suggested by Theorem \ref{thm:abelian}: is $SI(M)$ always homotopy invariant? Or perhaps depending on  $\pi_1(M)$ only?  The
full inertial set $I(M)$ is not, as shown by Hausmann \cite[Theorem 6.6]{H80}.

\section{Remarks on categories}\label{sec:diff}

Although everything is formulated in the topological category TOP, the theory discussed in this paper works equally well in  the categories DIFF and PL.
In fact, taking this into account, some of the results are actually slightly stronger than stated.  For instance, in Theorem \ref{thm:noninertial} the manifolds $M$ and $N$ can be chosen to be smooth, but
$M\#(S^p\times S^{n-p})$ and $N\#(S^p\times S^{n-p})$ are not only non--diffeomorphic; they are not even 
homeomorphic.  We should also point out that although the proof of Theorem \ref{thm:abelian} used results from 
\cite{HMTW} and \cite{TW} which deal with  the topological surgery obstruction,  in dimensions $\geqslant 5$ the conclusion of the theorem will automatically be true also in PL and DIFF.  This is because the part of the surgery sequences used in the proof are 
sequences of abelian groups in all categories, and if the homomorphism $l_0$ is injective on the image of the topological
surgery obstruction, it is also injective on the image of the smooth and PL surgery obstructions. 
\par
To compare the statements concerning inertial sets we need the following:

\begin{lm}
Let $M$ be a smooth manifold of dimension $\geqslant 5$.  If $W$ is a topological $h$-cobordism from $M$, then $W$ has
a unique smooth structure compatible with the given structure on $M$.  If in addition $W$ is inertial in TOP, then it is also
inertial in DIFF.\par
Similarly when $M$ is a PL manifold.
\end{lm}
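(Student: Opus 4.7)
I would use Kirby--Siebenmann smoothing theory in dimensions $\geqslant 5$, applied in two stages.

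For the first statement, recall that smoothings of a topological manifold $X$ extending a given smoothing on a closed subset $Y$ are classified, up to concordance rel $Y$, by lifts of the classifying map $X \to B\mathrm{TOP}$ through the fibration $B\mathrm{O} \to B\mathrm{TOP}$ that agree with the prescribed lift on $Y$. The obstructions to existence and uniqueness of such lifts lie in $H^{i+1}(X, Y;\pi_i(\mathrm{TOP}/\mathrm{O}))$ and $H^i(X, Y;\pi_i(\mathrm{TOP}/\mathrm{O}))$, respectively. Applied with $(X,Y) = (W, M)$, the inclusion $M \hookrightarrow W$ is a homotopy equivalence (as $W$ is an $h$-cobordism from $M$), so $H^*(W, M; -)$ vanishes for every coefficient system and all obstructions are trivially zero. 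This yields the first claim.

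For the second statement, equip $W$ with its unique smooth structure; then $W$ is a smooth $h$-cobordism from $M$ to a smooth manifold $M'_\Sigma$, where the subscript records the induced smoothing. The TOP-inertial hypothesis provides a homeomorphism $\phi\colon M' \to M$. Pulling back the smooth structure on $M$ along $\phi$ defines a second smoothing $M'_\phi$ of the topological manifold $M'$, and by construction $\phi\colon M'_\phi \to M$ is tautologically a diffeomorphism. Thus it suffices to produce a diffeomorphism $M'_\Sigma \cong M'_\phi$. Using the deformation retraction $r\colon W \to M$ and the inclusion $j\colon M' \hookrightarrow W$, the smoothing $M'_\Sigma$ is classified by the composite $M' \xrightarrow{j} W \xrightarrow{r} M \to B\mathrm{O}$, i.e.\ by pullback of the classifying map of $M$ along the natural homotopy equivalence $h = r\circ j$; the smoothing $M'_\phi$ is classified by pullback along $\phi$. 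Once the two classifying maps are arranged to agree up to homotopy, the two smoothings of $M'$ are concordant and the product structure theorem then delivers a diffeomorphism $M'_\Sigma \cong M'_\phi$; composing with $\phi$ gives the required diffeomorphism $M'_\Sigma \cong M$.

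The main obstacle is reconciling $\phi$ with $h$ up to homotopy. In general these maps $M' \to M$ need not be homotopic, so a little work is required: one either composes $\phi$ with a suitable self-homeomorphism of $M$ to adjust its homotopy class, or directly produces a concordance of smoothings via the smoothing space, again using the vanishing relative cohomology groups to kill any further obstructions. The PL version of the lemma is proved in exactly the same way, with $\mathrm{TOP}/\mathrm{O}$ replaced by $\mathrm{TOP}/\mathrm{PL} \simeq K(\mathbb{Z}/2, 3)$, which only makes the obstruction calculation simpler.
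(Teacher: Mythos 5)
Your treatment of the first claim is correct and is essentially what is packed into the paper's one-line appeal to smoothing theory: smoothings of $W$ rel $M$ are classified by lifts of $W\to B\mathrm{TOP}$ through $B\mathrm{O}\to B\mathrm{TOP}$ extending the given lift over $M$, and all relative obstruction groups vanish because $M\hookrightarrow W$ is a homotopy equivalence.

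For the second claim you take a genuinely different route and leave a real gap, which you do flag but do not close. You reduce the problem to showing that the smoothing $M'_\Sigma$ inherited from $W$ and the smoothing $M'_\phi$ pulled back along the homeomorphism $\phi$ are diffeomorphic, and you observe this would follow if $\phi$ were homotopic to the natural homotopy equivalence $h=r\circ j$. But that is precisely the extra condition defining a \emph{strongly} inertial $h$-cobordism, and the paper goes to considerable lengths (e.g.\ Example \ref{ex:K2}, Theorem \ref{thm:abelian} and the corollary following it) to show that inertial does \emph{not} imply strongly inertial. Your two proposed repairs do not obviously work: composing $\phi$ with a self-homeomorphism of $M$ only moves $[\phi]$ within its $\pi_0\mathrm{Homeo}(M)$-orbit, which in general need not contain $[h]$; and the vanishing of $H^*(W,M;\pi_*(\mathrm{TOP}/\mathrm{O}))$ gives uniqueness of the smoothing of $W$ rel $M$ (which you already used), not a comparison of $\Sigma_W|_{M'}$ with $\phi^*\Sigma_M$.

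The paper avoids ever comparing $\phi$ with $h$. Since by the first part $W$ carries a CAT structure over $M$, the inverse $h$-cobordism $-W$ exists in CAT, the CAT $s$-cobordism theorem gives CAT trivializations $W\cup_{M'}(-W)\cong M\times I$ and $(-W)\cup_M W\cong M'\times I$, and Stallings' infinite stacking then yields a CAT isomorphism $M\times\R\cong M'\times\R$. The TOP-inertiality hypothesis is only used at the very end, together with the product structure theorem, to descend from this CAT isomorphism after crossing with $\R$ to a CAT isomorphism $M\cong M'$. If you want to complete your classifying-map argument, that final descent step is exactly what you would have to reproduce, and it is where the real content lies.
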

\begin{proof}
The first part follows immediately from smoothing theory \cite{HM}, \cite{KiSi}. 
To prove the second assertion, we use a well-known trick to prove that if $M$ and $N$ are smoothly $h$-cobordant, 
then $M\times \R$ and $N\times\R$ are diffeomorphic (cf. \cite{S}).\par 
 Let $N$ be the other boundary component
of $W$, and let $-W$ be the $h$-cobordism from $N$ to $M$ such that $W \cup_N (-W)\approx M\times I$. Then it also follows that 
$-W \cup_M W\approx N\times I$.  Stacking infinitely in both directions then gives CAT homeomorphisms
\begin{multline*}
M\times\R\approx\cdots (W \cup_N -W)\cup_M(W \cup_N -W)\cdots\\
=   \cdots (-W \cup_M W)\cup_N(-W \cup_M W)\cdots\cdots \approx N\times\R.
\end{multline*}
But then $M$ and $N$ are also diffeomorphic, by the product structure theorem in smoothing theory \cite{HM}, \cite{KiSi}.
\end{proof}



\begin{thebibliography}{99}


\bibitem{bak} A. Bak, \emph{The computation of surgery groups of finite groups with abelian 2-hyperelementary subgroups,} Algebraic K-theory (Proc. Conf., Northwestern Univ., Evanston, Ill., 1976)  384-–409. Lecture Notes in Math., Vol. 551, Springer, Berlin, 1976. 

\bibitem{bak2} -------,  \emph{The involution on Whitehead torsion,}
General Topology and Appl. 7 (1977)   201--206. 

\bibitem{cohen} M. M. Cohen, \emph{A Course in Simple Homotopy Theory,} Graduate Texts in Mathematics, Springer Verlag, 1973.

\bibitem{CS1} S. Cappell and J Shaneson, \emph{On 4-dimensional s-cobordisms,} J. Differential Geom. 22 (1985)  97–-115. 

\bibitem{CS} --------, \emph{On 4-dimensional $s$-cobordisms II,} Comment. Math. Helv. 64 (1989)
 338--347.

\bibitem{Ham} I. Hambleton, \emph{Private communication.}

\bibitem{HMTW} --------, R. J. Milgram, L. Taylor and B. Williams, \emph{Surgery with finite fundamental group,}
Proc. London Math. Soc. (3) 56 (1988)  349--379.

\bibitem{H75} J.-C. Hausmann, \emph{$h$-Cobordismes entre vari\'et\'es hom\'eomorphes,} Comm. Math. Helvetici 50 (1975)
 9--13.

\bibitem{H80} --------, \emph{Open books and $h$-cobordisms,} Comm. Math. Helvetici 55 (1980)
 330--346.

\bibitem{Hae}  A. Haefliger, \emph{Plongements diff\'erentiables de vari\'et\'es dans vari\'et\'es,}  
Comment. Math. Helv. 36 (1961)  47-–82.

\bibitem{HL} A. Hatcher and T. Lawson, \emph{Stability theorems for "concordance
implies isotopy" and "h-cobordism implies diffeomorphism",} Duke Math. J. 43 (1976)  555–-560,

\bibitem{HM} M. W. Hirsch and B. Mazur,
\emph{Smoothings of piecewise linear manifolds,} Annals of Mathematics Studies, No. 80. (1974) Princeton University Press.

\bibitem{J}  F. E. A. Johnson, \emph{Lefschetz duality and topological tubular neighborhoods,} Trans. Amer. Math. Soc. 172 (1972) 
 95--110. 

\bibitem{KiSi} R. C. Kirby and L. C. Siebenmann, \emph{Foundational essays on topological manifolds, smoothings, and triangulations,} Annals of Mathematics Studies, No. 88  (1977) Princeton University Press.

\bibitem{KS89} S. Kwasik and R. Schultz, \emph{On $s$-cobordisms of metacyclic prism manifolds,} Invent. Math. 97 (1989)
 526--552.  

\bibitem{KS95} --------, \emph{Visible surgery, four-dimensional $h$-cobordisms and related questions in geometric topology,}
$K$-theory 9 (1995) 323--352.

\bibitem{KS99} --------, \emph{On $h$-cobordisms of spherical space forms,} Proc. Am. Math. Soc.
127 (1999)  1525--1532.

\bibitem{Lat} M. P. Latiolais, \emph{The simple homotopy type of finite 2-dimensional $CW$-complexes with
abelian $\pi_1$,}  Trans. Am. Math. Soc. 293 (1986)  655--661.

\bibitem{L74} T. Lawson, \emph{Inertial $h$-cobordisms with finite cyclic fundamenal group,} Proc. Am. Math, Soc. 44 (1974)
 492--496.

\bibitem{L77} --------, \emph{Trivializing $h$-cobordisms by stabilization,} Math. Zeit. 156 (1977)  211--215.

\bibitem{mazur} B. Mazur, \emph{Relative neighborhoods and the theorems of Smale,} Annals of Math. 77 (1963)  232--249.

\bibitem{m61} J. Milnor, \emph{Two complexes which are homeomorphic but not combinatorially equivalent,} Annals of Math. 74
(1961)  575--590.

\bibitem{mwt} --------, \emph{Whitehead torsion,} Bull. AMS 72 ( 1966)
 358--426.

\bibitem{oliver} R. Oliver, \emph{Whitehead groups of finite groups,} London Math. Soc. Lect. Note Series 132 (1988) Cambridge University Press.

\bibitem{oliver80} --------, \emph{$SK_1$ for finite group rings: II,}  Math. Scand. 47 (1980)  195--231.

\bibitem{P} W. Pardon, \emph{Mod 2 Semi-characteristics and the Converse to a Theorem of Milnor,} Math. Zeit. 171 (1980)  247-268. 

\bibitem{R} A. Ranicki, \emph{Algebraic and Geometric Surgery,} Oxford Math. Monographs 2002.

\bibitem{S}  J. R. Stallings,  \emph{On infinite processes leading to differentiability in the complement of a point,} 
Differential and Combinatorial Topology (A Symposium in Honor of Marston Morse), Princeton Univ. Press (1965)  245--245.

\bibitem{Sw} R. Swan, \emph{Periodic resolutions for finite groups,} Ann. of Math. (2) 72 (1960) 267--291.

\bibitem{TW} L. Taylor and B. Williams, \emph{Surgery on closed manifolds,} Preprint, University of Notre Dame (1980).
  
\bibitem{V} L. N. Vaserstein, \emph{On the stabilization of the general linear group over a ring,}
Math. USSR Sbornik 8 (1969)  383--400.

\bibitem{WeI} S. Weinberger, \emph{Homologically trivial group actions I. Simply connected manifolds,} Amer. J. Math. 108 (1986) 1005-–1021.

\bibitem{WeII} --------, \emph{Homologically trivial group actions. II. Nonsimply connected manifolds,} Amer. J. Math. 108 (1986) 
1259–-1275.

\bibitem{Ws} C. T. C. Wall, \emph{Surgery on compact Manifolds} Academic Press, New York 1970.

\bibitem{Wu} --------, \emph{Norms of units in group rings,} Proc. London Math. Soc. (3) 29 ((1974)  593--632.

\bibitem{Wf} ---------, \emph{Fomulae for surgery obstructions,} Topology 15 (1976)  189--210.

\end{thebibliography}
\end{document}